\documentclass[12pt,leqno]{article}
\usepackage{latexsym,amsfonts,epsfig}
\usepackage{amsmath,amssymb,amsthm}
\usepackage[notref, notcite]{}
\usepackage[colorlinks,linkcolor=red,citecolor=blue,urlcolor=blue]{hyperref}
\usepackage[margin=0.5in]{geometry}
\usepackage{graphicx}
\input epsf
\newtheorem{theorem}{Theorem}[section]

\newtheorem{remark}{Remark}[section]

\RequirePackage[colorlinks,citecolor=blue,urlcolor=blue]{hyperref}

\begin{document}

\title{A Markovian Gau\ss \, Inequality for Asymmetric Deviations from the Mode of Symmetric Unimodal Distributions}

\author{ Chris A.J. Klaassen \\
Korteweg-de Vries Insitute for Mathematics \\
University of Amsterdam}

\maketitle

\date{}

\noindent
Keywords: Markovian Gau\ss\ inequality, Khintchine representation, Volkov's inequality.

\noindent
MSC Classification: 60E15

\begin{abstract}
For a random variable with a unimodal distribution and finite second moment Gau\ss \, (1823) proved a sharp bound on the probability of the random variable to be outside a symmetric interval around its mode.
An alternative proof for it is given based on Khintchine's representation of unimodal random variables.
Analogously, a sharp inequality is proved for the slightly broader class of unimodal distributions with finite first absolute moment, which might be called a Markovian Gau\ss\ inequality.
For symmetric unimodal distributions with finite second moment Semenikhin (2019) generalized Gau\ss's inequality to arbitrary intervals.
For the class of symmetric unimodal distributions with finite first absolute moment we construct a Markovian version of it.
Related inequalities of Volkov (1969) and Sellke and Sellke (1997) will be discussed as well.
\end{abstract}

\maketitle
\section{Introduction}\label{introduction}
On February 15, 1821 Johann Carl Friedrich Gau\ss \, presented the first part of his {\em Theoria Combinationis Observationum Erroribus Minimis Obnoxiae} to the Royal Academy in G\"ottingen.
It was published in \cite{Gaus} and translated from Latin into English in \cite{Stew}.
It contains in its Sections 9 and 10 on pages 8--12 results that imply that for any unimodal random variable $X$ with mode at $\nu$ the inequality
\begin{equation}\label{Gauss1}
P(|X - \nu| \geq v) \leq \frac 49\, \frac {E((X - \nu)^2)}{v^2},\quad v>0,
\end{equation}
holds.
The well known Bienaym\'e-Chebyshev inequality, which holds for arbitrary random variables $X$ with mean $\mu,$ is given by
\begin{equation}\label{BC}
 P(|X - \mu| \geq v) \leq \frac{E((X - \mu)^2)}{v^2},\quad v>0,
\end{equation}
and was first published by Bienaym\'e \cite{Bien} in 1853, but not in an obvious form, cf. \cite{Heyd}.
It was also published by Chebyshev \cite{Cheb} in 1867, who used the inequality to prove a law of large numbers.

Interestingly, Gau\ss's inequality precedes the Bienaym\'e-Chebyshev inequality, by three decades even.
A detailed description of Gau\ss's proof is presented by \cite{Hoog}.
\cite{Sell} describes the history of the Gau\ss \, inequality and refers to \cite{Puke} for three proofs, namely Gau\ss's one, a variation on it, and the one from exercise 4 on page 256 of Cram\'er \cite{Cram}.
An elegant fourth proof was given by Volkov \cite{Volk} via a basic inequality.
We will discuss this proof in the next section, where we will also present a fifth proof based on the Khintchine characterization of unimodal distributions.
This Khintchine approach was applied also in \cite{Ion}.
A slight adaptation of this fifth proof yields
\begin{equation}\label{GaussM1}
P(|X - \nu| \geq v) \leq \frac {E(|X - \nu|)}{2v},\quad v>0,
\end{equation}
which may be viewed as a Markov version of Gau\ss's inequality.
This inequality is also presented in Section \ref{Gi}.

A further generalization consists in replacing $(X-\nu)^2$ in (\ref{Gauss1}) and $|X-\nu|$ in (\ref{GaussM1}) by another even function of $X-\nu$.
Such a generalization was presented by Sellke and Sellke in \cite{Sell}; we present a simple reformulation of it in the next section together with a proof via Volkov's inequality.

Restricting attention to symmetric unimodal distributions Semenikhin \cite{Seme} generalized the Gau\ss\ inequality from symmetric to possibly asymmetric intervals around the mode.
He mentions several application areas for this type of inequalities.
These inequalities can be used also for control charts in statistical process control.
We discuss and reformulate Semenikhin's result in Section \ref{Gig} and we present a Markov version of it there.
The proof of this Markov version is also based on our approach via Khintchine's characterization.

\section{Gau\ss's inequality}\label{Gi}

 As mentioned in the Introduction Gau\ss's inequality may be proved via an inequality of Volkov. As his publication \cite{Volk} in Russian is hard to find, we present a version of his inequality and proof here.

\begin{theorem}[Volkov's inequality]\label{th:Volkov}
Let $X$ be a random variable on $[0, \infty)$ with nonincreasing density $f(\cdot)$ and let $\psi(\cdot)$ be an absolutely continuous function on $[0,\infty)$ with Radon-Nikodym derivative $\psi'(\cdot)$ with respect to Lebesgue measure. If $\psi(\cdot)$ is nondecreasing and
\begin{equation}\label{conditionong}
\psi(x) \geq x,\quad x \geq 0,
\end{equation}
holds, then we have
\begin{equation}\label{Volkovinequality}
P( X \geq \psi(0)) \leq E\left(\psi'(X)\right).
\end{equation}
\end{theorem}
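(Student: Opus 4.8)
The plan is to reduce the inequality to a single pointwise ``basic inequality'' for $\psi$, after rewriting both sides of (\ref{Volkovinequality}) as integrals in a common ``scale'' variable. Write $a = \psi(0)$, and note that $a \geq 0$ by evaluating (\ref{conditionong}) at $x=0$.

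First I would exploit that $f$ is nonincreasing with $f(\infty)=0$, so that $f(x) = \nu([x,\infty))$ for the nonnegative measure $\nu = -\mathrm{d}f$ on $[0,\infty)$; equivalently, by Khintchine's representation one may take $X \stackrel{d}{=} UT$ with $U$ uniform on $[0,1]$ independent of $T$. Either viewpoint lets me express both sides through the variable $t$. Using Tonelli (legitimate since $\psi' \geq 0$ as $\psi$ is nondecreasing), the right-hand side becomes $\int_0^\infty \psi'(x) f(x)\,\mathrm{d}x = \int_{[0,\infty)} \bigl(\psi(t)-\psi(0)\bigr)\,\mathrm{d}\nu(t)$, where the inner integral $\int_0^t \psi'(x)\,\mathrm{d}x = \psi(t)-\psi(0)$ uses absolute continuity of $\psi$. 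The same manipulation turns the left-hand side $P(X \geq a) = \int_a^\infty f(x)\,\mathrm{d}x$ into $\int_{[0,\infty)} (t - a)^+\,\mathrm{d}\nu(t)$.

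With both sides written against $\mathrm{d}\nu$, it suffices to prove the pointwise inequality $(t - \psi(0))^+ \leq \psi(t) - \psi(0)$ for every $t \geq 0$. I would split into two cases. If $t \geq \psi(0)$, the left side equals $t - \psi(0)$ and the claim reduces to $t \leq \psi(t)$, which is exactly hypothesis (\ref{conditionong}). If $t < \psi(0)$, the left side is $0$ and the claim reduces to $\psi(t) \geq \psi(0)$; since $t \geq 0$ and $\psi$ is nondecreasing, this holds as well. Integrating this pointwise bound against the nonnegative measure $\nu$ then yields (\ref{Volkovinequality}).

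The routine points are the Tonelli/Khintchine bookkeeping and the boundary behaviour at $t=0$, where $(\psi(t)-\psi(0))/t$ is to be read as the right derivative of $\psi$ at $0$. The conceptual heart — and the step I expect to be the real obstacle to getting the hypotheses used in the right places — is isolating the correct pointwise inequality $(t-\psi(0))^+ \leq \psi(t)-\psi(0)$ and seeing that it is driven by $\psi(x)\geq x$ on $\{t \geq \psi(0)\}$ and by monotonicity of $\psi$ on the complementary range. This pointwise estimate is precisely Volkov's ``basic inequality,'' and everything else is a mechanical transport of the two probabilities onto the measure generated by the monotone density.
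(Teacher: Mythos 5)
Your proof is correct, but it is not the paper's route. The paper proves Volkov's inequality by the substitution $y=\psi(x)$: it writes $E(\psi'(X))=\int_0^\infty f(x)\,d\psi(x)=\int_{\psi(0)}^\infty f\bigl(\psi^{-1}(y)\bigr)\,dy$ and then uses that (\ref{conditionong}) forces $\psi^{-1}(y)\leq y$ together with the monotonicity of $f$, so the hypothesis on $\psi$ enters through the \emph{inverse} function and the hypothesis on $f$ enters as a final pointwise comparison of $f(\psi^{-1}(y))$ with $f(y)$. You instead disintegrate the monotone density, $f(x)=\nu((x,\infty))$ with $\nu=-df$ (equivalently the Khintchine mixture-of-uniforms picture), rewrite both sides as integrals $\int (t-\psi(0))^+\,d\nu(t)$ and $\int(\psi(t)-\psi(0))\,d\nu(t)$, and reduce everything to the pointwise bound $(t-\psi(0))^+\leq\psi(t)-\psi(0)$, driven by (\ref{conditionong}) on $\{t\geq\psi(0)\}$ and by monotonicity of $\psi$ elsewhere. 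What your version buys: it never needs $\psi^{-1}$, so there is no care required when $\psi$ has flat stretches (in the paper's proof $\psi^{-1}$ must be read as a generalized inverse for the substitution to be legitimate), and it is methodologically the same Khintchine device the paper itself uses later for Theorems \ref{th:Gausssimple}, \ref{th:MarkovGauss} and \ref{th:symmetricunimodalfirst}, so it unifies the paper's two proof techniques. What the paper's version buys is brevity: one change of variables and one monotonicity remark, with no layer-cake bookkeeping. Two small cosmetic points in your write-up: the measure $\nu$ need not be finite when $f$ blows up at $0$ (it is only $\sigma$-finite, which is all Tonelli needs, so you should say so), and your closing remark about reading $(\psi(t)-\psi(0))/t$ as a right derivative at $t=0$ is vacuous in the $\nu$-formulation, since no division by $t$ ever occurs there.
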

\begin{proof}[Proof of Theorem \ref{th:Volkov}]
By the substitution theorem for integrals and the properties of $\psi(\cdot)$ we have
\begin{equation}\label{substitution}
E\left(\psi'(X)\right) = \int_0^\infty \psi'(x) f(x)\, dx = \int_0^\infty f(x)\, d\psi(x) = \int_{\psi(0)}^\infty f\left(\psi^{-1}(y)\right)\, dy.
\end{equation}
As (\ref{conditionong}) implies $\psi^{-1}(y) \leq y$ for $y\geq \psi(0),$ this yields (\ref{Volkovinequality}) in view of the monotonicity of $f(\cdot).$
\end{proof}

In stead of showing how Gau\ss's inequality follows from this inequality, as done in \cite{Volk}, we present a simplified version of the generalization of Gau\ss's inequality by \cite{Sell}, which we prove via Volkov's inequality.

\begin{theorem}[Inequality of Sellke and Sellke]\label{th:2Sellke}
For $v>0$ let $g(\cdot)$ be a nonnegative, nondecreasing function on $[0,\infty)$ that is positive at some point in $(0,v).$ With
\begin{equation}\label{root}
x_v = \sup \left\{ x > v \ :\ \int_0^x g(y)\, dy - g(x)(x-v) \geq 0 \right\}
\end{equation}
and for any unimodal random variable $X$ with mode at $\nu,$ the inequality
\begin{equation}\label{Sellkeinequality}
P(|X - \nu| \geq v) \leq  \frac{Eg(|X - \nu|)}{g(x_v-)}
\end{equation}
holds with equality if $x_v$ is finite, $|X - \nu|$ has a uniform distribution on $(0, x_v),$ and $g(\cdot)$ is continuous at $x_v.$
\end{theorem}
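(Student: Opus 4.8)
The plan is to derive (\ref{Sellkeinequality}) from Volkov's inequality (Theorem \ref{th:Volkov}) applied to $Y=|X-\nu|$. The first step is to note that unimodality of $X$ makes $Y$ a nonnegative random variable with a nonincreasing density on $[0,\infty)$: if $f$ is the density of $X$, nondecreasing up to $\nu$ and nonincreasing afterwards, then $Y$ has density $h(y)=f(\nu+y)+f(\nu-y)$, and each summand is nonincreasing in $y\ge 0$. This places us exactly in the hypotheses of Theorem \ref{th:Volkov}.

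The second step is to engineer $\psi$ so that Volkov's conclusion reproduces both sides of (\ref{Sellkeinequality}). Writing $G(x)=\int_0^x g(y)\,dy$, I would set $\psi(x)=v+G(x)/g(x_v-)$. Then $\psi(0)=v$ and $\psi'(x)=g(x)/g(x_v-)$, so that $P(Y\ge\psi(0))=P(|X-\nu|\ge v)$ and $E(\psi'(Y))=Eg(|X-\nu|)/g(x_v-)$, matching the two sides of (\ref{Sellkeinequality}). Absolute continuity of $\psi$ and $\psi'\ge 0$ (hence $\psi$ nondecreasing) are immediate from $g\ge 0$ nondecreasing, and $g(x_v-)>0$ because $g$ is positive at some point of $(0,v)$, so $x_v>v$ and $g(x_v-)\ge g$ at that point.

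The crux, which I expect to be the main obstacle, is verifying Volkov's growth condition (\ref{conditionong}), namely $\psi(x)\ge x$. For $x\le v$ this is automatic, while for $x>v$ it is equivalent to $\rho(x):=G(x)/(x-v)\ge g(x_v-)$. Here I would prove the key lemma that the function $\phi(x)=G(x)-g(x)(x-v)$ from (\ref{root}) is nonincreasing on $[v,\infty)$ with $\phi(v)=G(v)>0$; this follows from a short estimate using $g\le g(x_2)$ on $[x_1,x_2]$ together with $g(x_2)\ge g(x_1)$. Consequently the defining set in (\ref{root}) is an interval with right endpoint $x_v$, and since $\rho'(x)=-\phi(x)/(x-v)^2$, the function $\rho$ is nonincreasing on $(v,x_v)$ and nondecreasing on $(x_v,\infty)$, so it attains its minimum over $(v,\infty)$ at $x_v$. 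From $\phi(x_v-)\ge 0$ one reads off $\rho(x_v)=G(x_v)/(x_v-v)\ge g(x_v-)$, which is precisely $\psi(x)\ge x$; in the degenerate case $x_v=\infty$ the same conclusion holds since $\rho$ is then nonincreasing with limit $g(\infty-)=g(x_v-)$. Volkov's inequality now gives (\ref{Sellkeinequality}).

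Finally, for the equality statement I would take $|X-\nu|\sim U(0,x_v)$ with $x_v<\infty$, which is realized by a unimodal $X$, and compute $P(|X-\nu|\ge v)=(x_v-v)/x_v$ and $Eg(|X-\nu|)=G(x_v)/x_v$. The claimed equality then reduces to $(x_v-v)\,g(x_v-)=G(x_v)$, i.e.\ $\phi(x_v-)=0$; since $g$ is continuous at $x_v$ and $\phi$ is nonincreasing with $\phi\ge 0$ to the left and $\phi\le 0$ to the right of $x_v$, continuity forces $\phi(x_v)=0$, closing the argument. The remaining bookkeeping, the left-limit $g(x_v-)$ and the case $x_v=\infty$, is routine once the monotonicity lemma for $\phi$ is in hand.
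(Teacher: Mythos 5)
Your proof follows essentially the same route as the paper's: apply Volkov's inequality to $|X-\nu|$ with $\psi(x)=v+\int_0^x g(y)\,dy$ divided by a normalizing constant, the key lemma being that $\chi(x)=\int_0^x g(y)\,dy-g(x)(x-v)$ is nonincreasing on $[v,\infty)$ (your $\phi$ is exactly the paper's $\chi$, proved by the same estimate). The only real difference in the inequality part is how Volkov's condition $\psi(x)\geq x$ is checked: you normalize by $g(x_v-)$ directly and verify the condition through the monotonicity of $\rho(x)=G(x)/(x-v)$, which is decreasing up to $x_v$ and increasing after, whereas the paper normalizes by $g(x_v-\epsilon)$, checks the condition via convexity of $\psi(x)-x$ (whose minimum value is $\chi(x_v-\epsilon)/g(x_v-\epsilon)\geq 0$), and lets $\epsilon\downarrow 0$ at the end. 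Both are valid, and your treatment of the equality case is in fact slightly more complete: you prove $\chi(x_v)=0$ from the sign change of $\chi$ and continuity of $g$ at $x_v$, a fact the paper uses without comment.

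There is, however, a genuine gap in your first step: you assume $X$ has a density $f$, so that $|X-\nu|$ has a nonincreasing density $h(y)=f(\nu+y)+f(\nu-y)$ on $(0,\infty)$. A unimodal distribution in the sense used here (Khintchine) may carry an atom at its mode --- indeed the extremal distributions in Theorem \ref{th:Gauss}, obtained precisely by applying Theorem \ref{th:2Sellke}, are mixtures of a uniform distribution with a point mass at $0$ --- and for such $X$ the variable $|X-\nu|$ has an atom at $0$, so Volkov's theorem does not apply to it as stated. The paper closes exactly this hole: it writes $P(|X-\nu|\geq v)=p\,P(|X-\nu|\geq v \mid |X-\nu|>0)$ with $1-p$ the mass at $0$, applies Volkov's inequality to the conditional law of $|X-\nu|$ given $|X-\nu|>0$ (which does have a nonincreasing density), and then uses $g\geq 0$ to bound $p\,E\bigl(g(|X-\nu|)\mid |X-\nu|>0\bigr)\leq E\bigl(g(|X-\nu|)\bigr)$. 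The repair is one line, but as written your argument establishes the inequality only for absolutely continuous $X$, which is strictly less than the theorem claims.
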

\begin{remark}
Applying this theorem with $g(x)=x^2$ one immediately obtains the Gau\ss\ inequality (\ref{Gauss1}).
Similarly $g(x)=x$ results in its Markov version (\ref{GaussM1}).
\end{remark}

\noindent
\begin{proof}[Proof of Theorem \ref{th:2Sellke}]
As $g(\cdot)$ is nonnegative, the function $\chi(x) = \int_0^x g(y)\, dy - g(x)(x-v)$ is nonnegative for $x \in[0,v].$
Furthermore, $\chi(v)$ is positive and finite, and for $v \leq x < y$ we have
\begin{eqnarray}\label{chi}
\lefteqn{\chi(x) - \chi(y) = -\int_x^y g(z)\, dz - g(x)(x-v) + g(y)(y-v) }\\
&& \geq -(y-x)g(y) - g(x)(x-v) + g(y)(y-v) = (g(y) - g(x))(x-v) \geq 0, \nonumber
\end{eqnarray}
which shows that $\chi(\cdot)$ is nonincreasing on $[v,\infty).$ It follows that for $\epsilon > 0$ sufficiently small $\chi(x_v - \epsilon) \geq 0$ holds. Note that $x_v$ might be infinity, which in view of (\ref{chi}) can happen only if $g(\cdot)$ is bounded.

 We define
\begin{equation}\label{psig}
\psi(x) =  \frac 1{g(x_v - \epsilon)} \int_0^x g(y)\, dy + v,\quad x \geq 0,
\end{equation}
and note that this function is nondecreasing, absolutely continuous with derivative \\ $g(x)/g(x_v -\epsilon),$ and hence convex. It follows that $\psi(x) - x$ is also convex and attains its minimum at $x_v - \epsilon$ with value $\psi(x_v - \epsilon) - (x_v - \epsilon) = \chi(x_v - \epsilon)/g(x_v - \epsilon) \geq  0.$ We conclude that $\psi(\cdot)$ satisfies the conditions of Theorem \ref{th:Volkov}.

Note that $|X - \nu|$ has a unimodal distribution on $[0,\infty)$ with a nonincreasing density on $(0,\infty)$ and a possible point mass, $1-p$ say, at 0. We apply Volkov's inequality (\ref{Volkovinequality}) to $|X - \nu|$ conditionally, given $|X - \nu|>0,$ with $\psi(\cdot)$ from (\ref{psig}) and obtain for $v>0$
\begin{eqnarray}\label{Sellke1}
\lefteqn{P(|X - \nu| \geq v) = p\, P(|X - \nu| \geq v \mid |X - \nu|>0) \nonumber } \\
&& \leq \frac p{g(x_v - \epsilon)} E (g(|X - \nu|) \, \mid \, |X - \nu|>0) \leq \frac {E(g(|X - \nu|))}{g(x_v - \epsilon)}.
\end{eqnarray}
Taking the limit for $\epsilon \downarrow 0$ we arrive at (\ref{Sellkeinequality}).

If $g(\cdot)$ is continuous at $x_v$ and $|X - \nu|$ has a uniform distribution on $(0, x_v),$ then  $P(|X - \nu| \geq v) = 1 - v/x_v$ holds as well as $E(g(|X - \nu|)) = x_v^{-1} \int_0^{x_v} g(y)\,dy \\ = g(x_v)(1 - v/x_v)$ in view of $\chi(x_v) =0.$
\end{proof}

\begin{remark}\label{symmetricSellke}
Equality in (\ref{Sellkeinequality}) is also attained if $X - \nu$ is uniform on $(-x_v,x_v)$. This means that Theorem \ref{th:2Sellke} yields sharp bounds also if the class of unimodal distributions is restricted to distributions symmetric about their mode.
\end{remark}

\begin{remark}\label{sameinequality}
It may be verified that the constant $A$ from \cite{Sell} (see also its Corollary 2) equals $g(x_v-)$ from (\ref{Sellkeinequality}).
\end{remark}

The complete Gau\ss\ inequality is implied by Theorem \ref{th:2Sellke}, as we show next.

\begin{theorem}[Gau\ss's inequality]\label{th:Gauss}
For any unimodal random variable $X$ with mode at $\nu,$ the inequality
\begin{eqnarray}\label{Gaussinequality}
\qquad P(|X - \nu| \geq v)\leq  {\left\{\begin{array}{rcl}
\frac 49 \, \frac {E((X - \nu)^2)}{v^2},& & v \geq \frac {2\sqrt{E((X - \nu)^2)}}{\sqrt 3}, \\
\\
1- \frac v{\sqrt{3 E((X - \nu)^2)}}, &  & 0\leq v < \frac {2\sqrt{E((X - \nu)^2)}}{\sqrt 3},\\
\end{array}\right.}
\end{eqnarray}
holds with equality if the distribution of $|X - \nu|$ is the mixture of a uniform distribution on $(0,c)$ with
\begin{equation}\label{uniform}
c = \left(\tfrac 32 v \right) \vee \sqrt{3 E((X - \nu)^2)}
\end{equation}
and a distribution degenerate at $0$ such that the point mass at $0$ equals
\begin{equation}\label{pointmass}
\left(1- \tfrac{4E((X - \nu)^2)}{3v^2} \right) \vee 0.
\end{equation}
\end{theorem}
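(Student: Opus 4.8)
The plan is to deduce the two regimes of (\ref{Gaussinequality}) from Theorem \ref{th:2Sellke}, supplemented by the Khintchine representation already used for (\ref{Gauss1}) and (\ref{GaussM1}). Throughout I write $\sigma^2 = E((X-\nu)^2)$ and take $\nu = 0$ without loss of generality. I would first dispose of the regime $v \ge 2\sigma/\sqrt3$ exactly as announced in the Remark after Theorem \ref{th:2Sellke}, namely by applying it with $g(x)=x^2$. Here $\int_0^x g(y)\,dy - g(x)(x-v) = \tfrac13 x^3 - x^2(x-v) = x^2\bigl(v-\tfrac23 x\bigr)$, which is nonnegative on $(v,\infty)$ precisely for $x \le \tfrac32 v$, so that $x_v = \tfrac32 v$ and $g(x_v-) = \tfrac94 v^2$. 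Theorem \ref{th:2Sellke} then yields $P(|X|\ge v) \le \tfrac49\,\sigma^2/v^2$, which is the first line of (\ref{Gaussinequality}).

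The second line, $0 \le v < 2\sigma/\sqrt3$, is genuinely sharper than what $g(x)=x^2$ delivers, because there $\tfrac49\sigma^2/v^2$ exceeds the claimed value, and so the constraint that the second moment equals $\sigma^2$ must be made to bind. For this I would invoke Khintchine's representation $|X| \stackrel{d}{=} U|Z|$, with $U$ uniform on $(0,1)$ independent of $Z$, which gives $E(Z^2)=3\sigma^2$ and, for $v>0$,
\[
P(|X|\ge v) = E\bigl[(1 - v/|Z|)^+\bigr].
\]
Passing to the variable $t = Z^2$ and setting $\phi(t) = (1 - v/\sqrt t)^+$ with $\phi(0)=0$, the task reduces to bounding $E[\phi(t)]$ subject to the single moment constraint $E[t] = 3\sigma^2$, an atom of $X$ at $0$ corresponding harmlessly to mass of $t$ at $0$.

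The crux is a concavity argument. One checks that $\phi$ vanishes on $[0,v^2]$ and is increasing and concave on $[v^2,\infty)$, so its least concave majorant $\hat\phi$ is the line through the origin tangent to $\phi$ — the tangency occurring at $t_0 = \tfrac94 v^2$ — followed by $\phi$ itself beyond $t_0$. Jensen's inequality for the concave function $\hat\phi$ then gives $P(|X|\ge v) = E[\phi(t)] \le E[\hat\phi(t)] \le \hat\phi(3\sigma^2)$. Evaluating $\hat\phi$ at $3\sigma^2$ returns $\tfrac49\sigma^2/v^2$ when $3\sigma^2 \le t_0$ and $1 - v/\sqrt{3\sigma^2}$ when $3\sigma^2 > t_0$, the two cases meeting exactly at $3\sigma^2 = t_0$, i.e. $v = 2\sigma/\sqrt3$; this argument in fact recovers both lines of (\ref{Gaussinequality}) simultaneously, the Sellke--Sellke step above serving only as the quick route to the first.

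Finally I would extract the extremal distributions from the equality conditions: equality in Jensen forces $t$ onto the contact set of $\hat\phi$ with its supporting line, a two-point law on $\{0,t_0\}$ in the first regime and the degenerate law at $t=3\sigma^2$ in the second. Transported back through $|X|=U|Z|$ these become a mixture of a point mass at $0$ with a uniform law on $(0,c)$, where $c = \tfrac32 v$ respectively $c=\sqrt{3\sigma^2}$, and matching the mixing weight to $E[t]=3\sigma^2$ reproduces (\ref{uniform}) and (\ref{pointmass}) after the indicated $\vee$-operations. I expect the main obstacle to lie entirely in the second regime: verifying that $\phi$ is concave past $v^2$, locating $t_0$ correctly, justifying the two inequalities in the Jensen chain, and confirming that the extremal parameters $c$ and point mass agree with the stated formulas continuously across the threshold $v = 2\sigma/\sqrt3$.
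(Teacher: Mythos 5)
Your proof is correct, but it is not the route the paper takes for this theorem. The paper's own proof of Theorem \ref{th:Gauss} consists of two applications of Theorem \ref{th:2Sellke}: first with $g(x)=x^2$ (giving $x_v=3v/2$ and the first line, exactly as in your opening paragraph), and then with $g(x)=x^2+3E((X-\nu)^2)\bigl[2\sqrt{E((X-\nu)^2)}/(\sqrt3\,v)-1\bigr]$, i.e.\ $x^2$ plus a constant tuned to the second regime, so that $x_v=\sqrt{3E((X-\nu)^2)}$ and $E(g(|X-\nu|))/g(x_v-)$ collapses to $1-v/\sqrt{3E((X-\nu)^2)}$; sharpness is then checked by direct computation. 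Your treatment of the second regime --- Khintchine's representation, the change of variable $t=Z^2$, and Jensen applied to the least concave majorant of $(1-v/\sqrt t\,)^+$ with tangency at $t_0=\tfrac94 v^2$ --- is instead a relative of the paper's ``fifth proof'' (Theorem \ref{th:Gausssimple}), though even there the execution differs: the paper conditions on $\{|Y|\ge v\}$, reduces to a two-point (Bernoulli) extremal law and maximizes $(1-v/b)\,3/b^2$ over $b\ge\sqrt3$, rather than concavifying. What your route buys: both regimes fall out of a single argument, the threshold $v=2\sigma/\sqrt3$ is explained as the point where the tangency abscissa $t_0$ crosses the moment value $E[t]=3\sigma^2$, and the extremal distributions are derived from Jensen's equality conditions rather than guessed and verified (the paper's statement only requires verifying them, which your mixing-weight computation supplies). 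What the paper's route buys: given Theorem \ref{th:2Sellke} the whole proof is two lines, and it shows that the Sellke--Sellke bound with a well-chosen $g$ already encodes the second-moment constraint; the cost is that the second choice of $g$ appears unmotivated until one checks that it reproduces the sharp constant.
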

\begin{proof}[Proof of Theorem \ref{th:Gauss}]
We apply the inequality of Sellke and Sellke with $g(x)=x^2$ and obtain $x_v=3v/2$ and the first inequality in (\ref{Gaussinequality}).
Applying Theorem \ref{th:2Sellke} again with
\begin{equation}\label{defg}
g(x) = x^2 + 3 E((X-\nu)^2)\left[\frac {2\sqrt{E((X-\nu)^2)}}{{\sqrt 3}\ v} -1 \right]
\end{equation}
 we obtain $x_v = \sqrt{3E((X-\nu)^2)}$ and the second inequality in (\ref{Gaussinequality}).

Straightforward computation shows that the obtained inequalities (\ref{Gaussinequality}) are sharp.
\end{proof}

\begin{remark}\label{optimalityGauss}
As Gau\ss \, restricted attention to densities, he could not show the optimality of his inequality for $v > 2\sqrt{E((X - \nu)^2)}/{\sqrt 3}.$
\end{remark}

\begin{remark}\label{symmetricGauss}
If $X - \nu$ is uniform on $(-c,c)$, it attains equality in (\ref{Gaussinequality}). Consequently, the bounds in Theorem \ref{th:Gauss} remain sharp for distributions symmetric about their mode.
\end{remark}

With the help of Khintchine's representation of unimodal distributions, \cite{Khin}, and Jensen's inequality a fifth proof of Gau\ss's inequality can be given.
To simplify notation we will formulate the essence of Gau\ss's inequality as follows.

\begin{theorem}[Gau\ss's inequality simplified]\label{th:Gausssimple}
For any unimodal random variable $X$ with mode at 0 and second moment equal to 1, the inequality
\begin{eqnarray}\label{Gausssimpleinequality}
\qquad P(|X| \geq v)\leq  {\left\{\begin{array}{rcl}
\frac 49 \, \frac 1{v^2},& & v \geq \frac 2{\sqrt 3}, \\
\\
1- \frac v{\sqrt3}, &  & 0\leq v \leq \frac 2{\sqrt 3},\\
\end{array}\right.}
\end{eqnarray}
holds with equality if the distribution of $X$ is the mixture of a uniform distribution on $(-c,c)$ with
\begin{equation}\label{uniformsimple}
c = \left(\tfrac 32 v \right) \vee {\sqrt 3}
\end{equation}
and a distribution degenerate at $0$ such that the point mass at $0$ equals
\begin{equation}\label{pointmasssimple}
\left(1- \tfrac 4{3v^2} \right) \vee 0.
\end{equation}
\end{theorem}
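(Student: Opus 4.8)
The plan is to prove this simplified Gau\ss\ inequality by applying Khintchine's representation theorem rather than re-deriving it from the Sellke--Sellke inequality, since the stated goal is to showcase the ``fifth proof'' promised in the introduction. Khintchine's characterization tells us that a random variable $X$ is unimodal with mode at $0$ if and only if it can be written as $X = U\,Z$, where $U$ is uniform on $(0,1)$ and independent of the random variable $Z$. The key structural fact is that conditioning on $Z = z$ makes $X$ uniform on the interval with endpoints $0$ and $z$, so that the whole distribution of $X$ is a mixture of such uniform distributions. First I would record this representation and translate the two hypotheses into constraints on $Z$: the mode-at-$0$ and second-moment-equal-to-$1$ conditions become, via $E(X^2) = E(U^2)E(Z^2) = \tfrac13 E(Z^2)$, the normalization $E(Z^2) = 3$.

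Next I would compute the tail probability through the mixture. For fixed $v > 0$ and conditioning on $|Z| = z$, the conditional probability $P(|X| \geq v \mid |Z| = z)$ equals $(1 - v/z)^+$, because $|X|$ is then uniform on $(0, z)$. Hence
\begin{equation}\label{planmix}
P(|X| \geq v) = E\!\left(\left(1 - \frac{v}{|Z|}\right)^{+}\right),
\end{equation}
and the task reduces to maximizing the right-hand side of (\ref{planmix}) over all distributions of $|Z|$ on $[0,\infty)$ subject to $E(Z^2) = 3$. This is the heart of the argument: I would apply Jensen's inequality to the function $h(t) = (1 - v/\sqrt{t})^{+}$ of $t = Z^2$, exploiting its concavity on the relevant range. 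Concavity of $h$ lets me bound $E(h(Z^2)) \leq h(E(Z^2)) = h(3) = (1 - v/\sqrt3)^{+}$, which yields the second branch of (\ref{Gausssimpleinequality}) directly for $v \leq 2/\sqrt3$, and I would then verify that the uniform-on-$(-\sqrt3,\sqrt3)$ distribution attains it. For the regime $v \geq 2/\sqrt3$ the unconstrained Jensen bound is not tight, and I expect the function $h$ fails to be globally concave there; the sharper $4/(9v^2)$ bound must come from a tangent-line (supporting-line) majorization of $h$ chosen to match the optimal two-point or point-mass-plus-uniform extremal distribution of (\ref{uniformsimple})--(\ref{pointmasssimple}).

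The main obstacle will be handling the two regimes of $v$ uniformly, since $h(t) = (1 - v/\sqrt t)^{+}$ is concave only on part of its domain and the extremal distribution changes character at the threshold $v = 2/\sqrt3$. The clean way around this is a supporting-line argument: for each $v$ I would seek constants $a, b$ so that $(1 - v/\sqrt t)^{+} \leq a + b t$ for all $t \geq 0$, with equality at the support points of the conjectured extremal law, and then take expectations to obtain $P(|X|\geq v) \leq a + b\,E(Z^2) = a + 3b$. Choosing the line tangent to $h$ at the correct point recovers $4/(9v^2)$ in the heavy-tail regime and $1 - v/\sqrt3$ in the light-tail regime. Finally I would confirm sharpness by checking that the mixture described in (\ref{uniformsimple}) and (\ref{pointmasssimple})---equivalently a specific two-valued $|Z|$---meets the supporting line at its atoms and hence turns the inequality into an equality, completing the proof.
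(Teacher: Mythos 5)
Your proposal is correct in substance but takes a genuinely different route from the paper's, even though both rest on Khintchine's representation $X=UY$ with $E(Y^2)=3$. The paper conditions on the event $\{|Y|\geq v\}$, applies Jensen's inequality to the concave increasing function $y\mapsto 1-v/y$ on $[v,\infty)$ together with $E(|Y|\mid |Y|\geq v)\leq\sqrt{E(Y^2\mid |Y|\geq v)}$, then reduces to a two-point (Bernoulli) law for $|Y|$ and explicitly maximizes $(1-v/b)\,3/b^2$ over $b\geq\sqrt3$, the two branches coming from whether the unconstrained maximizer $b=3v/2$ exceeds $\sqrt3$. You instead write the tail exactly as $P(|X|\geq v)=E\,h(Z^2)$ with $h(t)=(1-v/\sqrt t)^{+}$ and majorize $h$ by an affine function of $t$: the tangent line at $t=3$ when $v\leq 2/\sqrt3$, and the line through the origin tangent at $t=9v^2/4$ when $v\geq 2/\sqrt3$; taking expectations gives both branches at once, and sharpness is transparent because the extremal laws of (\ref{uniformsimple})--(\ref{pointmasssimple}) correspond to $Z^2$ supported exactly where the line touches $h$. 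This duality-style argument is arguably cleaner: it avoids the paper's reduction-to-Bernoulli step (which the paper asserts without detailed justification) and the conditional-expectation manipulations. One caveat you must repair when writing this up: your intermediate claim that plain Jensen applies for $v\leq 2/\sqrt3$ because $h$ is concave ``on the relevant range'' is not accurate. For every $v>0$ the function $h$ vanishes on $[0,v^2]$ and has a convex kink at $t=v^2$, so it is never concave on all of $[0,\infty)$, and Jensen cannot be invoked on the unrestricted support of $Z^2$; indeed for $v>\sqrt3$ the ``Jensen value'' $h(3)=0$ lies below achievable probabilities, so it is not even an upper bound. What validates the conclusion $E\,h(Z^2)\leq h(3)$ in the light-tail regime is precisely your supporting-line criterion: the tangent line $\ell(t)=1-v/\sqrt3+\tfrac{v}{2}\,3^{-3/2}(t-3)$ satisfies $\ell(0)=1-\sqrt3\,v/2\geq 0$ exactly when $v\leq 2/\sqrt3$, and only then does it dominate $h$ on all of $[0,\infty)$. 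Since your final plan carries out the supporting-line verification in both regimes, the proof goes through once that tangency computation is made explicit.
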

\begin{proof}[Proof of Theorem \ref{th:Gausssimple}]
By Khintchine's representation theorem $X$ can be written as $X=UY$ with $U$ and $Y$ independent random variables, $U$ uniformly distributed on the unit interval, and $E(Y^2)=3.$
As $y \mapsto 1-v/y$ is concave and increasing on $[v,\infty)$, Jensen's inequality yields
\begin{eqnarray}\label{Gs1}
\lefteqn{ P(|X| \geq v) = P( U|Y| \geq v) = E\left(1-\frac v{|Y|} \mid |Y| \geq v \right) P(|Y| \geq v) } \\
&& \leq \left(1- \frac v {E( |Y| \mid |Y| \geq v)} \right) P(|Y| \geq v)
\leq \left(1- \frac v {\sqrt{E( Y^2 \mid |Y| \geq v)}} \right) P(|Y| \geq v). \nonumber
\end{eqnarray}
Among the random variables $|Y|$ with $E(Y^2)=3$ that attain the maximum value of the right hand side of (\ref{Gs1}), there is a Bernoulli random variable
\begin{eqnarray}\label{Gs2}
\qquad  B = {\left\{\begin{array}{rccr}
a = \sqrt{E( Y^2 \mid |Y| < v)} && {\rm with~probability}\ \ 1-p \\
 \\
b = \sqrt{E( Y^2 \mid |Y| \geq v)} && {\rm with~probability}\ \ p\\
\end{array}\right.}
\end{eqnarray}
with $a^2(1-p) + b^2p =3$. As $p=(3-a^2)/(b^2-a^2)$ has to be positive and 1 at most, we have $0 \leq a < \sqrt{3} \leq b$.
Consequently, (\ref{Gs1}) and (\ref{Gs2}) yield
\begin{equation}\label{Gs3}
P(|X| \geq v) \leq \sup_{0 \leq a < \sqrt{3} \leq b} \left(1-\frac vb \right) \frac {3-a^2}{b^2-a^2}
= \sup_{\sqrt{3} \leq b} \left(1-\frac vb \right) \frac 3{b^2}.
\end{equation}
As the function $b\mapsto (1-v/b)b^{-2}$ is increasing-decreasing on the positive half line and attains its maximum at $3v/2$, the supremum at the right hand side of (\ref{Gs3}) is attained at $\sqrt 3$ or $3v/2$ depending on $v \leq 2/\sqrt{3}$ or $v \geq 2/\sqrt{3}$, respectively. Straightforward computations complete the proof of the Theorem.
\end{proof}

For a slightly broader class of unimodal distributions  we have our complete Markov version of the Gau\ss\ inequality, which extends (\ref{GaussM1}) as follows.

\begin{theorem}[Markov-Gau\ss \ inequality]\label{th:MarkovGauss}
For any unimodal random variable $X$ with mode at 0 and first absolute moment equal to 1, the inequality
\begin{eqnarray}\label{MarkovGaussinequality}
\qquad P(|X| \geq v)\leq  {\left\{\begin{array}{rcl}
\frac 1{2v},& & v \geq 1, \\
\\
1- \frac v2, &  & 0 \leq v \leq 1,\\
\end{array}\right.}
\end{eqnarray}
holds with equality if the distribution of $X$ is the mixture of a uniform distribution on $(-c,c)$ with $c = 2(v \vee 1)$
and a distribution degenerate at $0$ such that the point mass at $0$ equals $(1-2/c)\vee 0$.
\end{theorem}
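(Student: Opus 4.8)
The plan is to mirror the fifth proof of Gau\ss's inequality (Theorem~\ref{th:Gausssimple}), replacing the second-moment normalization by the first-moment one. First I would invoke Khintchine's representation to write $X=UY$ with $U$ uniform on the unit interval independent of $Y$; since $E|X|=E(U)\,E|Y|=\tfrac12 E|Y|$, the hypothesis $E|X|=1$ translates into $E|Y|=2$.

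Next, conditioning on $Y$ and using $P(U|Y|\geq v\mid Y)=(1-v/|Y|)^+$, I would write
\[
P(|X|\geq v)=E\!\left(1-\frac{v}{|Y|}\,\Big|\,|Y|\geq v\right)P(|Y|\geq v).
\]
Because $y\mapsto 1-v/y$ is concave on $[v,\infty)$, Jensen's inequality gives the upper bound $(1-v/b)\,p$ with $b=E(|Y|\mid |Y|\geq v)$ and $p=P(|Y|\geq v)$. In contrast to the Gau\ss\ case, no further step relating the conditional mean to a conditional second moment is needed, since the constraint $E|Y|=2$ already involves only first moments.

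The heart of the argument is the reduction to a two-point distribution. Writing $a=E(|Y|\mid |Y|<v)\in[0,v)$, the mean constraint reads $a(1-p)+bp=2$ with $b\geq v$, and the target $(1-v/b)p$ depends on the law of $|Y|$ only through the pair $(p,b)$. Since this target is increasing in $b$ while the mean is held fixed, pushing $a$ down to $0$ (equivalently forcing $bp=2$) is optimal; this is exactly the replacement of $|Y|$ by the Bernoulli variable in the style of (\ref{Gs2}). Substituting $b=2/p$ turns the problem into maximizing $p-\tfrac{v}{2}p^2$ over $p\in[0,1]$, a single-variable concave optimization with unconstrained maximizer $p=1/v$.

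Finally I would split on whether $1/v\leq 1$. For $v\geq 1$ the maximizer $p=1/v$ is admissible and yields the bound $1/(2v)$ with $b=2v$, whereas for $0\leq v\leq 1$ the parabola is still increasing at $p=1$, so the constrained maximum sits at $p=1$ and gives $1-v/2$ with $b=2$. The extremal laws claimed in the theorem then follow by tracing these optimizers back through $X=UY$: the atom at $b=2v$ (resp.\ $b=2$), taken with symmetric signs, produces the uniform component on $(-c,c)$ with $c=2(v\vee 1)$, and the remaining mass at $0$ produces the point mass $(1-2/c)\vee 0$. I expect the only delicate point to be justifying the two-point reduction cleanly, which is handled exactly as in the proof of Theorem~\ref{th:Gausssimple}.
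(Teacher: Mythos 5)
Your proposal is correct and follows essentially the same route as the paper: the paper's proof of Theorem \ref{th:MarkovGauss} likewise invokes the Khintchine representation with $E|Y|=2$, applies only the first (Jensen) inequality from (\ref{Gs1}), and reduces to the two-point optimization $\sup_{0 \leq a < 2 \leq b}\left(1-\frac vb\right)\frac{2-a}{b-a} = \sup_{2\leq b}\left(1-\frac vb\right)\frac 2b = \min\left\{1-\frac v2, \frac 1{2v}\right\}$, which is exactly your reduction to $a=0$, $bp=2$ followed by the one-variable maximization. Your parametrization by $p$ rather than $b$ is an immaterial difference, and your tracing of the optimizers back to the extremal mixtures matches the paper's equality cases.
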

\begin{proof}[Proof of Theorem \ref{th:MarkovGauss}]
Based on the first inequality from (\ref{Gs1}) we arrive at the analogue
\begin{equation}\label{Gs4}
P(|X| \geq v) \leq \sup_{0 \leq a < 2 \leq b} \left(1-\frac vb \right) \frac {2-a}{b-a}
= \sup_{2 \leq b} \left(1-\frac vb \right) \frac 2b = \min \left\{ 1-\frac v2, \frac 1{2v} \right\}
\end{equation}
of (\ref{Gs3}).
\end{proof}

\section{Gau\ss's inequality generalized}\label{Gig}

Our main interest in this paper is in sharp upper bounds on the probability that a symmetric unimodal random variable $X$ with mode at $\nu$ falls outside an arbitrary interval containing its mode, i.e. sharp upper bounds on
\begin{equation}\label{Gi1}
P(X -\nu \leq -u\ \  \mbox{\rm or}\ \  X -\nu \geq v), \quad u>0,\,\, v>0.
\end{equation}

We restrict attention to symmetric distributions as inequalities for (\ref{Gi1}) degenerate for possibly asymmetric distributions. This can be seen by noting that for $u\geq v$ the probability in (\ref{Gi1}) is bounded from above by (\ref{Gaussinequality}). These bounds of Gau\ss \,  are sharp for (\ref{Gi1}) as well, and are attained if the distribution of $X - \nu$ is the mixture of a uniform distribution on $(0,c)$ with $c$ as in (\ref{uniform}) and a distribution degenerate at $0$ such that the point mass at $0$ equals the value given in (\ref{pointmass}). In a similar way (\ref{Sellkeinequality}) yields a sharp upper bound to (\ref{Gi1}), when asymmetric unimodal distributions are allowed.

It is quite natural to restrict attention to symmetric unimodal distributions here, as equalities in the Gau\ss \, inequality (Theorem \ref{th:Gauss}) and in its generalization by the Sellke's (Theorem \ref{th:2Sellke}) are attained also by random variables $X - \nu$ that are uniform on $(-c,c)$ and $(-x_v,x_v),$ respectively. So, the upper bounds in Theorems \ref{th:2Sellke}, \ref{th:Gauss}, and \ref{th:MarkovGauss}  are still sharp if the class of unimodal distributions is restricted to symmetric distributions.

It should be noted that one-sided probabilities are allowed in (\ref{Gi1}), as we accept that either $u$ or $v$ could be equal to infinity.
In a recent paper \cite{Seme} a sharp upper bound on the probability (\ref{Gi1}) was given, thus generalizing Theorem \ref{th:Gauss} for symmetric unimodal distributions with finite second moment.
To keep notation simple we assume $\nu=0$ without loss of generality, and $E(X^2)=1$.
The parametrization chosen in \cite{Seme} is $h=(u+v)/2$, the half-length of the interval $(-u,v)$, and $m=(v-u)/2$, the midpoint of this interval.
We translate the inequality from \cite{Seme} into our notation as follows.

\begin{theorem}[Semenikhin]\label{th:symmetricunimodal}
Assume the distribution of the random variable $X$ is symmetric and unimodal with mode at 0 and has finite second moment equal to 1.
For $0< u \leq v$ one has
\begin{eqnarray}\label{MarkovGaussinequality2}
P(X\leq -u\ \  \mbox{\rm or}\ \  X \geq v) \leq
{\left\{\begin{array}{lcl}
1-\frac {u+v}{2\sqrt{3}}      &                         &  S_1 \\
\frac {16}{9(u+v)^2}          &                         &  S_2 \\
\frac 2{9u^2}                 & \textnormal{\it within} &  S_3 \\
\frac 12 \left(1-\frac{u^{2/3}}{(u+v)^{2/3}-u^{2/3}} + \frac 4{9\left((u+v)^{2/3}-u^{2/3} \right)^3} \right) & &  S_4 \\
\frac 12\left(1-\frac u{\sqrt{3}} \right) &             &  S_5
\end{array}\right.}
\end{eqnarray}
with
\begin{eqnarray}\label{setsS}
S_1 & = & \left\{(u,v)\,:\, 0<u \leq v \leq \psi(u) \wedge \left(\frac 4{\sqrt{3}} - u \right) \right\} \nonumber \\
S_2 & = & \left\{(u,v)\,:\, 0<u \leq v \leq (2\sqrt{2} -1)u,\, \frac 4{\sqrt{3}} - u \leq v \right\} \nonumber \\
S_3 & = & \left\{(u,v)\,:\, \frac 2{\sqrt{3}} \leq u,\, (2\sqrt{2} -1)u \leq v \right\} \\
S_4 & = & \left\{(u,v)\,:\, 0<u,\, \psi(u) \vee \left((2\sqrt{2} -1)u \right) \leq v \leq
\left( \left(1+\frac 2{\sqrt{3}\,u}\right)^{3/2} -1 \right) u \right\} \nonumber \\
S_5 & = & \left\{(u,v)\,:\, 0< u \leq \frac 2{\sqrt{3}},\, \left( \left(1+\frac 2{\sqrt{3}\,u}\right)^{3/2} -1 \right) u \leq v \right\}, \nonumber
\end{eqnarray}
where $\psi$ is defined by
\begin{equation}\label{psi}
\psi(u) = \frac 2{\sqrt{3}} -u + \sqrt{3} \left(\frac u3 \right)^{2/3} \left( \left[1 - \sqrt{1- \left(\frac u3 \right)^2} \right]^{1/3}
                            +\left(\frac u3 \right)^{2/3} \left[1 - \sqrt{1- \left(\frac u3 \right)^2} \right]^{-1/3} \right).
\end{equation}
Equalities hold within $S_1$ and $S_5$ if $X$ is uniformly distributed on the interval $[-\sqrt{3},\sqrt{3}]$.
Equality holds within $S_2$ if $X$ is uniform on $[-3(u+v)/4,\,3(u+v)/4]$ with probability $16/(3(u+v)^2)$ and has a point mass
at 0 with probability $1-16/(3(u+v)^2).$
Equality holds within $S_3$ if $X$ is uniform on $[-3u/2,\,3u/2]$ with probability $4/(3u^2)$ and has a point mass
at 0 with probability $1-4/(3u^2).$
Equality holds within $S_4$ if $X$ is uniform on $[-u_0,\, u_0]$ with probability $1-p$ and uniform on $[-u_1,\, u_1]$ with probability $p$, where
\begin{eqnarray}\label{puu}
\lefteqn{ u_0 = \frac 32 u^{1/3} \left((u+v)^{2/3} - u^{2/3} \right), \quad  u_1 = \frac 32 (u+v)^{1/3} \left((u+v)^{2/3} - u^{2/3} \right), \nonumber } \\
&& \hspace{5em} p = \frac {4-3 u^{2/3} \left((u+v)^{2/3} - u^{2/3} \right)^2}{3 \left((u+v)^{2/3} - u^{2/3} \right)^3} \hspace{5em}
\end{eqnarray}
hold.
\end{theorem}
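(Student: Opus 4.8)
The plan is to turn the bound into a one–dimensional moment problem by Khintchine's representation and then to solve that problem with a least concave majorant. Since $X$ is symmetric and unimodal with mode $0$, Khintchine's theorem \cite{Khin} lets me write $X=UZ$, with $U$ uniform on $(-1,1)$ and $Z\geq 0$ independent of $U$; as $E(U^2)=1/3$ the hypothesis $E(X^2)=1$ becomes $E(Z^2)=3$. Conditioning on $Z$ and using $P(U\geq t)=\tfrac12(1-t)$ for $t\in[0,1]$, the two disjoint events in (\ref{Gi1}) combine to
\[
P(X\leq -u\ \text{or}\ X\geq v)=E\big[\phi(Z)\big],\qquad
\phi(z)=\tfrac12\big(1-\tfrac vz\big)^{+}+\tfrac12\big(1-\tfrac uz\big)^{+}.
\]
Hence the sharp upper bound is the supremum of $E[\phi(Z)]$ over all $Z\geq 0$ with $E(Z^2)=3$, and any extremal $Z$ produces, through $X=UZ$, the extremal distribution claimed in the theorem.

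Next I would record the shape of $\phi$: it vanishes on $[0,u]$, equals $\tfrac12(1-u/z)$ on $[u,v]$ and equals $1-(u+v)/(2z)$ on $[v,\infty)$; it is continuous, each piece is increasing and concave, and at $z=u$ and $z=v$ there are upward kinks. I pass to the variable $s=z^2$ and set $\Phi(s)=\phi(\sqrt s)$, so the constraint reads $E(S)=3$ for $S=Z^2\geq 0$. Because $s\mapsto 1-c\,s^{-1/2}$ is concave, each of the three pieces of $\Phi$ is concave in $s$, still with upward kinks at $s=u^2$ and $s=v^2$. For a single mean constraint one has
\[
\sup\{E[\Phi(S)]:S\geq 0,\ E(S)=3\}=\widehat\Phi(3),
\]
where $\widehat\Phi$ is the least concave majorant of $\Phi$ on $[0,\infty)$: the inequality $\leq$ is Jensen's inequality applied to $\widehat\Phi\geq\Phi$, while equality is attained by placing mass on the contact set of $\widehat\Phi$ with $\Phi$, over which $\widehat\Phi$ is affine.

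The heart of the proof is to determine $\widehat\Phi$ in a neighbourhood of $s=3$. Since $\Phi$ is flat at $0$ and has two upward kinks, its concave majorant is assembled from lines through the origin tangent to one of the two concave pieces, from the bitangent line that bridges the kink at $s=v^2$, and from arcs of $\Phi$ itself. Accordingly $s=3$ falls into exactly one of five configurations, each producing one line of (\ref{MarkovGaussinequality2}) together with its extremal law. When $3$ lies on the graph of $\Phi$ over $[v^2,\infty)$ one gets $S_1$ with $Z\equiv\sqrt3$; on the graph over $[u^2,v^2]$ one gets $S_5$, again with $Z\equiv\sqrt3$; the origin–tangent to the third piece, whose contact point is $z=3(u+v)/4$, gives $S_2$ with $Z\in\{0,3(u+v)/4\}$; the origin–tangent to the middle piece, with contact $z=3u/2$, gives $S_3$ with $Z\in\{0,3u/2\}$; and the bitangent spanning the kink at $v^2$ gives $S_4$ with $Z\in\{u_0,u_1\}$ and the weights in (\ref{puu}). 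In each case equality in (\ref{MarkovGaussinequality2}) follows because $\widehat\Phi$ is affine (or coincides with $\Phi$) over the support of the corresponding $Z$.

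I expect the bitangent analysis governing $S_4$ to be the main obstacle, and with it the precise delineation of the five regions in (\ref{setsS}). Imposing equal slopes and a common value for the supporting lines of the middle and third pieces yields, after the substitution $z=\sqrt s$, a pair of tangency equations whose solution is $u_0,u_1$ in (\ref{puu}); requiring a contact point to equal $\sqrt3$ (the boundaries between $S_4$ and the curved cases $S_1,S_5$) reduces to a cubic whose Cardano solution accounts for the nested radicals in the boundary curve $\psi$ of (\ref{psi}). The remaining thresholds in (\ref{setsS}) are exactly those at which $s=3$ passes from one segment of $\widehat\Phi$ to the next: a short computation shows that $v=(2\sqrt2-1)u$ is where a single line through the origin is tangent to both concave pieces at once (the switch of the origin–tangent from the third to the middle piece), and $u=2/\sqrt3$ is where the origin–tangent contact on the middle piece equals $\sqrt3$ (the passage from the chord to the curve). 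Verifying these thresholds and matching the resulting constants is then the remaining, largely computational, task.
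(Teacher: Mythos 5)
Your plan is correct, but note first that the paper itself contains no proof of this theorem: it is imported from Semenikhin (2019) and merely translated into the paper's notation, so there is no in-paper argument to match. The closest the paper comes is its proof of Theorem \ref{th:symmetricunimodalfirst} (the first-absolute-moment analogue), which starts exactly as you do --- Khintchine's representation reducing everything to the mixing variable --- but then solves the resulting moment problem via Jensen's inequality and an explicit constrained supremum over the parameters $(a,b,p,q)$ of a conditional two-point reduction, worked out in the long case analysis (\ref{su6})--(\ref{su18}). Your route differs in how the moment problem is solved: passing to $s=z^2$ makes the constraint linear, $E(S)=3$, so the sharp bound is the least concave majorant $\widehat\Phi(3)$ and the extremal laws are read off from the contact set; the five regions $S_1$--$S_5$ then fall out as the five possible positions of $s=3$ relative to the majorant's arcs, origin-tangents and bitangent. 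That is a genuinely cleaner organization than the paper's, and I verified that your structural claims reproduce the theorem exactly: the origin-tangent contacts $z=3(u+v)/4$ and $z=3u/2$ give the $S_2$ and $S_3$ bounds $16/(9(u+v)^2)$ and $2/(9u^2)$ together with the claimed point-mass mixtures; the bitangent contacts and weight agree with (\ref{puu}) and yield the $S_4$ bound; the origin-tangent switches pieces exactly when the two tangent slopes $2/(27u^2)$ and $16/(27(u+v)^2)$ coincide, i.e.\ at $v=(2\sqrt2-1)u$; the condition $u_0=\sqrt3$ gives the $S_4$/$S_5$ boundary $v=u\bigl[(1+2/(\sqrt3\,u))^{3/2}-1\bigr]$; and $u_1=\sqrt3$ is the cubic $x^3-u^{2/3}x-2/\sqrt3=0$ in $x=(u+v)^{1/3}$, whose Cardano root is exactly (\ref{psi}) after the simplification $(u/3)^{2/3}\bigl[1-\sqrt{1-(u/3)^2}\bigr]^{-1/3}=\bigl[1+\sqrt{1-(u/3)^2}\bigr]^{1/3}$. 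The one thing to flag is that your write-up defers the region bookkeeping as ``largely computational''; that is indeed where all the labour sits, but since every threshold, contact point and constant you identify is the correct one, the outline is sound and completes to a full proof.
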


\begin{remark}\label{quadrant}
Note that the union of the sets $S_1$--$S_5$ constitutes the part of the first quadrant that lies above the diagonal, namely
$\{(u,v)\,:\, 0<u \leq v\}$.
\end{remark}

\begin{remark}\label{largeuv}
Note that the diagonal $\{(u,v)\,:\, 0<u=v \}$ is contained in $S_1$ and $S_2$ and not in the other sets.
On this diagonal the corresponding bounds reduce to (\ref{Gauss1}); see also Theorem \ref{th:Gausssimple}.
\end{remark}

Finally, for the slightly broader class of unimodal distributions with finite first absolute moment we present our generalization to (\ref{Gi1}) of the Markovian Gau\ss\ inequality from Theorem \ref{th:MarkovGauss}.

\begin{theorem}\label{th:symmetricunimodalfirst}
Assume the distribution of the random variable $X$ is symmetric and unimodal with mode at 0 and has finite first absolute moment equal to 1.
For $0< u \leq v$ the probability $P(X\leq -u\ \  \mbox{\rm or}\ \  X \geq v)$ is bounded from above by
\begin{eqnarray}\label{MarkovGaussinequality2}
{\left\{\begin{array}{lcl}
1-\frac {u+v}4      &                         &  A_1 = \left\{(u,v)\,:\, 0<u \leq v \leq 2-u \right\} \\
\frac 12 - \frac u4 &                         &  A_2 = \left\{(u,v)\,:\, 0<u \leq 1,\, v > 2 + u^2/(2-u) \right\} \\
\frac 1{u+v}        & \textnormal{\it within} &  A_3 = \left\{(u,v)\,:\, 0<u \leq 1,\, 2-u < v \leq 2 + u^2/(2-u) \right\} \\
                    &                         & \hspace{4em} \cup \left\{(u,v)\,:\, 1 < u \leq v \leq 3u \right\} \\
\frac 1{4u}         &                         &  A_4 = \left\{(u,v)\,:\, 1<u,\, v > 3u \right\}.
\end{array}\right.}
\end{eqnarray}
Equalities hold within $A_1$ and $A_2$ if $X$ is uniformly distributed on the interval $[-2,2]$.
Equality holds within $A_3$ if $X$ is uniform on $[-(u+v),\, u+v]$ with probability $2/(u+v)$ and has a point mass
at 0 with probability $1-2/(u+v).$
Equality holds within $A_4$ if $X$ is uniform on $[-2u,\, 2u]$ with probability $1/u$ and has a point mass
at 0 with probability $1-1/u.$
\end{theorem}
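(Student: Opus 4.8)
The plan is to mimic the Khintchine--representation arguments used for Theorems \ref{th:Gausssimple} and \ref{th:MarkovGauss}. Write $X = UY$ with $U$ uniform on $(0,1)$ and $Y$ symmetric and independent of $U$; since $E(|X|) = \tfrac12 E(|Y|) = 1$ we have $E(|Y|) = 2$. Put $W = |Y| \geq 0$, so that $E(W) = 2$ and $W$ ranges over \emph{all} laws on $[0,\infty)$ with mean $2$ (given any such $W$, attaching an independent fair sign produces an admissible symmetric $Y$). Conditioning on $W$ and using $P(X \geq t) = P(X \leq -t) = \tfrac12 P(UW \geq t) = \tfrac12 E\big((1 - t/W)_+\big)$ for $t>0$, the two disjoint tails combine into
\[
P(X \leq -u\ \mbox{or}\ X \geq v) = E\big(\gamma(W)\big), \qquad \gamma(w) = \tfrac12\big[(1 - u/w)_+ + (1 - v/w)_+\big].
\]
For $0 < u \leq v$ the weight $\gamma$ vanishes on $[0,u]$, equals $\tfrac12(1 - u/w)$ on $[u,v]$, and equals $1 - (u+v)/(2w)$ on $[v,\infty)$; it is increasing with $\gamma(0^+)=0$ and $\gamma(\infty)=1$.

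The extremal problem is then to maximize $E(\gamma(W))$ subject to $E(W)=2$. Applying Jensen's inequality to the least concave majorant $\hat\gamma$ of $\gamma$ on $[0,\infty)$ gives $E(\gamma(W)) \leq E(\hat\gamma(W)) \leq \hat\gamma(E(W)) = \hat\gamma(2)$, and this bound is attained by any $W$ with mean $2$ supported on the contact set of $\hat\gamma$ (at most two points, by Carath\'eodory). Hence the sharp bound equals $\hat\gamma(2)$, exactly as in the supremum computations \eqref{Gs3}--\eqref{Gs4}. It suffices to compute $\hat\gamma(2)$ and to exhibit the matching two--point law of $W$, which translates back into a mixture for $X$ of a point mass at $0$ (from the atom $W=0$) and a uniform law on $[-c,c]$ (from an atom $W=c$), matching the stated equality cases.

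To evaluate $\hat\gamma(2)$ I would read off the geometry of $\gamma$, which is concave on $[u,v]$ and on $[v,\infty)$ separately but has the flat-then-rising corner at $w=u$ and an upward (convex) kink at $w=v$. The relevant supporting objects are: the graph of $\gamma$ itself, giving $\gamma(2)=1-(u+v)/4$ when $2 \geq v$ and $\gamma(2)=\tfrac12-u/4$ when $u \leq 2 \leq v$; the line through the origin tangent to the far branch, touching at $w=u+v$ with $\gamma(u+v)=\tfrac12$ and yielding $\hat\gamma(2)=1/(u+v)$ through $W\in\{0,u+v\}$; and the line through the origin tangent to the middle branch, touching at $w=2u$ with $\gamma(2u)=\tfrac14$ and yielding $\hat\gamma(2)=1/(4u)$ through $W\in\{0,2u\}$. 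Comparing the two origin tangents shows the far one dominates precisely when $v \leq 3u$, giving the $A_3/A_4$ boundary; the condition $u+v=2$ separates the regime in which $w=2$ lies beyond the contact point (so $\hat\gamma(2)=\gamma(2)$, region $A_1$) from the linear regime; and requiring the tangent to $\gamma$ at $w=2$ to dominate the far branch yields, after a short computation, the boundary $v = 2 + u^2/(2-u)$ between $A_2$ and $A_3$. Imposing the feasibility constraints $2/(u+v)\leq 1$ (for the $A_3$ construction) and $1/u\leq 1$ (for the $A_4$ construction) then reproduces the four regions together with their bounds and extremal laws.

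The main obstacle is the behaviour of $\hat\gamma$ across the convex kink at $w=v$: since $\gamma$ fails to be concave there, $\hat\gamma$ could in principle bridge the kink by a common tangent touching the middle branch at an interior point $a\in(u,v)$ and the far branch at an interior point $b>v$, which corresponds to an extremal $W$ supported on two strictly positive atoms, i.e.\ a mixture of two nondegenerate uniform laws for $X$ --- precisely the configuration underlying Semenikhin's region $S_4$ in Theorem \ref{th:symmetricunimodal}. The crux is therefore to locate exactly where this two--interior--point common tangent is the active piece of $\hat\gamma$ at $w=2$ and to confirm that throughout $A_1$--$A_4$ the majorant is instead realised either on the graph of $\gamma$ (in $A_1,A_2$) or on an origin tangent (in $A_3,A_4$); solving the double--tangency equations and checking this dominance against the simpler candidates is where essentially all the work lies, and is the step I would scrutinise most carefully, since the first--moment weight $\gamma$ has the same qualitative shape as its second--moment counterpart for which that intermediate region is genuinely present.
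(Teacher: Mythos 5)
Your reduction is correct, and it is a genuinely cleaner route than the paper's own proof (which conditions on $Y$, applies Jensen's inequality, and then solves a constrained optimization in the four variables $a,b,p,q$, (\ref{su5})--(\ref{su9})): the identity $P(X\leq-u\ \mbox{or}\ X\geq v)=E(\gamma(W))$ is exact, and the sharp bound is precisely $\hat\gamma(2)$. However, the step you deferred --- confirming that the tangent bridging the kink at $w=v$ is never the active piece of $\hat\gamma$ at $w=2$ inside $A_1$--$A_4$ --- is not a confirmation that can be carried out: it is false, and your own machinery disproves the stated theorem. Solving your double-tangency equations gives contact points $a^*=2\bigl(\sqrt{u(u+v)}-u\bigr)$ and $b^*=2\bigl(u+v-\sqrt{u(u+v)}\bigr)$; one checks that $a^*\leq v\leq b^*$ always, that the bridging segment is a piece of $\hat\gamma$ precisely when $v\geq 3u$ (equivalently $a^*\geq 2u$), and that it is the active piece at $w=2$ when moreover $a^*\leq 2\leq b^*$ --- a region that overlaps $A_1$, $A_2$ \emph{and} $A_3$. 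Concretely, take $(u,v)=(\tfrac12,2)\in A_3$, so that $a^*=\sqrt5-1$, $b^*=5-\sqrt5$, $a^*+b^*=4$: the law of $X$ that is uniform on $[-(\sqrt5-1),\,\sqrt5-1]$ with probability $\tfrac12$ and uniform on $[-(5-\sqrt5),\,5-\sqrt5]$ with probability $\tfrac12$ is symmetric, unimodal with mode $0$, has $E|X|=\tfrac14(a^*+b^*)=1$, and yet
\begin{equation*}
P\bigl(X\leq-\tfrac12\ \mbox{or}\ X\geq2\bigr)
=\frac{7-\sqrt5}{32}+\frac{11-\sqrt5}{32}
=\frac{9-\sqrt5}{16}\approx 0.4227\;>\;\frac25=\frac1{u+v},
\end{equation*}
violating the claimed bound on $A_3$. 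Similar violations occur inside $A_1$ (e.g.\ at $(u,v)=(0.3,1.5)$ the sharp value is $\approx0.5517>0.55=1-\tfrac{u+v}4$) and inside $A_2$ (e.g.\ at $(u,v)=(\tfrac12,3)$ the sharp value is $\approx0.3808>0.375=\tfrac12-\tfrac u4$).

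So the statement cannot be proved because it is false as printed: the correct theorem needs a fifth region --- the exact analogue of Semenikhin's $S_4$ in Theorem \ref{th:symmetricunimodal}, whose possible presence you rightly flagged --- on which the sharp bound is the value of the common tangent at $2$, namely $\tfrac12-\tfrac1{2(r-1)}+\tfrac1{4u(r-1)^2}$ with $r=\sqrt{(u+v)/u}$, attained by a mixture of two nondegenerate uniforms; this expression matches $\tfrac1{u+v}$ at $v=3u$, matches $\tfrac12-\tfrac u4$ at $v=2+\tfrac1u$ (where $a^*=2$), and matches $1-\tfrac{u+v}4$ where $b^*=2$, so it glues continuously onto the surviving parts of the paper's regions. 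For the record, the paper's proof goes astray exactly at the step corresponding to your deferred computation: differentiating (\ref{su14}) with respect to $a$ gives nonnegativity if and only if $\bigl(2(u+v+1)a-4u-8v\bigr)\sqrt{v-a}+\bigl(2(4-a)(v-a)+a(2-a)\bigr)\sqrt{u+v}\geq0$, whereas the paper's (\ref{su15}) has $+8v$ in place of $-8v$; with the wrong sign the condition indeed ``clearly'' holds and pushes the maximum to $a=2$, while with the correct sign there is an interior critical point (at $a=a^*$; for $(u,v)=(\tfrac12,2)$ the correctly signed expression vanishes at $a=\sqrt5-1$), and that interior maximum is exactly the missing fifth regime. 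Your plan, carried through, yields the corrected theorem; the step you singled out for scrutiny is precisely where the published argument breaks.
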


\begin{remark}\label{quadrant}
Note that the union of the sets $A_1$--$A_4$ constitutes the part of the first quadrant on and above its diagonal, namely
$\{(u,v)\,:\, 0<u \leq v\}$.
\end{remark}

\noindent
\begin{proof}[Proof of Theorem \ref{th:symmetricunimodalfirst}]
By the Khintchine representation theorem (cf. \cite{Khin}) of unimodal random variables, $X$ may be written as $X=UY$ with $U$ and $Y$ independent random variables and $U$ uniformly distributed on $[0,1]$.
Since $X$ is symmetric, $Y$ has to be symmetric as well.
In view of $E|X|=1$ we have $E|Y|=2$.
By Jensen's inequality we obtain
\begin{eqnarray}\label{su5}
\lefteqn{P(X\leq -u\ \  \mbox{\rm or}\ \  X \geq v) = P(X\geq u) + P(X \geq v) } \\
& = & E\left(1- \frac uY\,\Big|\,u \leq Y < v \right)\,P(u \leq Y < v) + E\left(2- \frac {u+v}Y\,\Big|\,Y \geq v \right)\,P(Y\geq v) \nonumber \\
& \leq &  \left[1- \frac u{E(Y \mid u \leq Y < v)}\right] \, P(u \leq Y < v) + \left[2- \frac {u+v}{E(Y \mid Y\geq v)}\right] \, P(Y\geq v). \nonumber
\end{eqnarray}
With the notation $a = E(Y \mid u \leq Y < v)>0,\, b = E(Y \mid Y\geq v)>0,\, p = P(u \leq Y < v),$ and $q = P(Y\geq v)$ this implies
\begin{eqnarray}\label{su6}
\lefteqn{ P(X\leq -u\ \  \mbox{\rm or}\ \  X \geq v) \leq \sup \left\{ \left(1-\frac ua \right)\,p + \left(2-\frac {u+v}b \right)\,q \ :\ 0 < u \leq a \leq v \leq b, \right. \nonumber } \\
&& \hspace{15em} \left. p \geq 0,\ q \geq 0,\ p+q \leq \frac 12,\ ap + bq \leq 1 \right\}.
\end{eqnarray}
By increasing $b$ if necessary, we see that this supremum is attained at $ap + bq =1.$
Then, $q= (1-ap)/b$ holds and $0 \leq q$ and $p+q \leq 1/2$ imply
\begin{equation}\label{su7}
0 \leq p \leq \frac 1a \wedge \frac{b-2}{2(b-a)}\ \ {\rm and\ hence}\ \ b \geq 2.
\end{equation}
Observe that $1/a \leq (b-2)/(2(b-a))$ is equivalent to $a \geq 2$.
So, by linearity in $p$ we see that the right hand side of (\ref{su6}) equals
\begin{eqnarray}\label{su8}
\lefteqn{ \sup \left\{ \left( 1-\frac ua \right) p + \left( 2- \frac{u+v}b \right) \frac {1-ap}b \,:\, 0 < u \leq a \leq v \leq v \vee 2 \leq b, \right. \nonumber } \\
&& \hspace{5cm} \left. 0 \leq p \leq \frac 1a \wedge \frac{b-2}{2(b-a)} \right\} \nonumber \\
&& = \sup_{0 < u \leq a \leq v \leq v \vee 2 \leq b} \max \left\{ \left( 2- \frac{u+v}b \right) \frac 1b, \
    \left( 1-\frac ua \right) \frac 1a {\bf 1}_{[a \geq 2]}, \right. \\
&& \hspace{5em} \left. \left[ \left( 1-\frac ua \right) \frac{b-2}{2(b-a)}
    + \left( 2- \frac{u+v}b \right) \frac{2-a}{2(b-a)} \right] {\bf 1}_{[a \leq 2]} \right\}. \nonumber
\end{eqnarray}
Together with (\ref{su6}) and (\ref{su7}) this implies (with $0 < u \leq v$)
\begin{eqnarray}\label{su9}
\lefteqn{ P(X\leq -u\ \  \mbox{\rm or}\ \  X \geq v)
        \leq \max \left\{ \sup _{v \vee 2\leq b} \left( 2- \frac{u+v}b \right) \frac 1b, \
                     \sup _{u \vee 2 \leq a \leq v} \left( 1-\frac ua \right) \frac 1a, \right. \nonumber } \\
&& \hspace{4em} \left. \sup_{u \leq a \leq v \wedge 2 \leq v \vee 2 \leq b} \left( 1-\frac ua \right) \frac{b-2}{2(b-a)}
    + \left( 2- \frac{u+v}b \right) \frac{2-a}{2(b-a)} \right\}.
\end{eqnarray}
As $b \mapsto (2-(u+v)/b)/b$ is increasing-decreasing and attains its maximum at $b=u+v$, we obtain
\begin{equation}\label{su10}
\sup _{v \vee 2 \leq b} \left( 2- \frac{u+v}b \right) \frac 1b
= \frac 1{u+v} {\bf 1}_{[0<u \leq v, u+v \geq 2]} + \left(1 - \frac {u+v}4 \right) {\bf 1}_{[0<u \leq v, u+v < 2]}.
\end{equation}
As $a \mapsto (1-u/a)/a$ is increasing-decreasing and attains its maximum at $a=2u$, we obtain
\begin{equation}\label{su11}
\sup _{u \vee 2 \leq a \leq v} \left( 1-\frac ua \right) \frac 1a
             = \frac 1{4u} {\bf 1}_{[1 \leq u \leq 2u \leq v]} + \left( 1-\frac uv \right) \frac 1v {\bf 1}_{[0 < u \leq v, 2 \leq v \leq 2u]}.
\end{equation}

To study under $0<u \leq v \wedge 2$
\begin{equation}\label{su12}
\sup_{u \leq a \leq v \wedge 2 \leq v \vee 2 \leq b} \left( 1-\frac ua \right) \frac{b-2}{2(b-a)}
+ \left( 2- \frac{u+v}b \right) \frac{2-a}{2(b-a)}
\end{equation}
we differentiate with respect to $b$ and note that the derivative is nonnegative if and only if
$(a+u)b^2 -2a(u+v)b +(u+v)a^2 \leq 0$ holds, i.e., if and only if
\begin{equation}\label{su13}
b \in \left[ \frac a{a+u} \left( u+v - \sqrt{(u+v)(v-a)} \right), \frac a{a+u} \left( u+v + \sqrt{(u+v)(v-a)} \right) \right]
\end{equation}
holds.
Observe that $v$ belongs to this interval and that hence the function in (\ref{su12}) is increasing-decreasing on $[v, \infty)$ with its maximum attained at the right endpoint from (\ref{su13}).
We discern two cases.
\begin{itemize}

\item[\bf A] $0<u \leq a \leq 2 \leq v \leq b$ \\
Substituting in this situation the value of the right endpoint from (\ref{su13}) into the function from (\ref{su12}) we arrive at
\begin{equation}\label{su14}
\sup_{[0<u \leq a \leq 2 \leq v]} 1- \frac {u+a}{a^2} +  \frac {2-a}{a^2} \left( \sqrt{u+v} - \sqrt{v-a} \right) \sqrt{u+v}.
\end{equation}
The derivative of this function with respect to $a$ is nonnegative if and only if
\begin{equation}\label{su15}
\left( 2(u+v+1)a + 8v - 4u \right)\sqrt{v-a} + \left( 2(4-a)(v-a) +a(2-a) \right) \sqrt{u+v} \geq 0
\end{equation}
holds.
Since this is clearly the case in the present situation the supremum in (\ref{su14}) is attained at $a=2$ yielding the value $(1-u/2)/2$.

\item[\bf B] $0<u \leq a \leq v \leq 2 \leq b$ \\
If in this situation 2 belongs to the interval from (\ref{su13}) the supremum over $b$ of the function from (\ref{su12}) is attained at the value of the right endpoint of the interval from (\ref{su13}).
Then like in (\ref{su14}) and with the help of (\ref{su15}) we arrive at
\begin{equation}\label{su16}
\sup_{[0<u \leq a \leq v \leq 2]} 1- \frac {u+a}{a^2} +  \frac {2-a}{a^2} \left( \sqrt{u+v} - \sqrt{v-a} \right) \sqrt{u+v}
\leq \frac 12 - \frac u4.
\end{equation}
If 2 does not belong to the interval from (\ref{su13}) the supremum over $b$ of the function from (\ref{su12}) is attained at $b=2$, which yields the value $1-(u+v)/4$.
Note that we have $(1-u/2)/2 \leq 1-(u+v)/4$ here in view of $v \leq 2$.
\end{itemize}
Combining {\bf A} and {\bf B} we see that the supremum in (\ref{su12}) equals
\begin{equation}\label{su17}
\left( \frac 12 - \frac u4 \right) {\bf 1}_{[0<u \leq 2 \leq v]} + \left( 1 - \frac{u+v}4 \right) {\bf 1}_{[0<u \leq v < 2]}.
\end{equation}
Combining (\ref{su9}). (\ref{su10}), (\ref{su11}) and (\ref{su17}) we obtain by straightforward, but tedious computations
\begin{eqnarray}\label{su18}
\lefteqn{ P(X\leq -u\ \  \mbox{\rm or}\ \  X \geq v) \nonumber } \\
&& \leq \max \left\{ {\bf 1}_{[0<u \leq v,\, v \leq 2-u]} \left(1 - \frac{u+v}4 \right),\, {\bf 1}_{[0<u \leq v \leq 2,\, v> 2-u]} \frac 1{u+v}, \right. \nonumber \\
&& \hspace{5em} {\bf 1}_{[0<u \leq 1,\, 2 < v \leq 2+ u^2/(2-u)]} \left(\frac 12 - \frac u4 \right),\, {\bf 1}_{[0<u \leq 1,\, v > 2+ u^2/(2-u)]} \frac1{u+v}, \nonumber \\
&& \hspace{5em} {\bf 1}_{[1<u \leq 2,\, 2u < v \leq 3u]} \frac 1{u+v},\, {\bf 1}_{[1<u \leq 2,\, v > 3u]} \frac 1{4u}, \\
&& \hspace{5em} {\bf 1}_{[1<u \leq 2,\, 2 < v \leq 2u]} \frac 1{u+v},\, {\bf 1}_{[2 < u \leq v \leq 2u]} \frac 1{u+v}, \nonumber \\
&& \hspace{5em} \left. {\bf 1}_{[2 < u,\, 2u <v \leq 3u]} \frac 1{u+v},\, {\bf 1}_{[2 < u, v > 3u]} \frac 1{4u} \right\} \nonumber \\
&& = \left(1 - \frac{u+v}4 \right) {\bf 1}_{[0<u \leq v \leq 2-u]} + \frac 1{u+v} {\bf 1}_{[0<u \leq 1,\, 2-u < v \leq 2 + u^2/(2-u)]} \nonumber \\
&& \hspace{3em} + \frac 1{u+v} {\bf 1}_{[1 < u \leq v \leq 3u]} + \left( \frac 12 - \frac u4 \right) {\bf 1}_{[0<u \leq 1,\, v >2 + u^2/(2-u)]}
    + \frac 1{4u} {\bf 1}_{[1<u,\, v > 3u]}. \nonumber
\end{eqnarray}
This inequality may be reformulated as in (\ref{MarkovGaussinequality2}).
Simple computations show that the random variables $X$ defined in the theorem attain equalities in (\ref{MarkovGaussinequality2}).
\end{proof}

\bibliographystyle{amsplain}

\end{document}